\title{A Seifert algorithm for lamination links }
\author{Ulrich Oertel}
\date{Version 2, July, 2018}
\newtheorem{thm}{Theorem}[section] \newtheorem{lemma}[thm]{Lemma}
\newtheorem{proposition}[thm]{Proposition}
\newtheorem{questions}[thm]{Questions} 
\newtheorem*{claim*}{Claim}
 \theoremstyle{definition}
\newtheorem{defn}[thm]{Definition}
 \newtheorem{ex}[thm]{Example}
\theoremstyle{remark}
\begin{document}

\maketitle


\def\HDS{half-disk sum}

\def\PLength{\text{Length}}

\def\Area{\text{Area}}
\def\Im{\text{Im}}
\def\im{\text{Im}}
\def\cl{\text{cl}}
\def\rel{\text{ rel }}
\def\irred{irreducible}
\def\half{spinal pair }
\def\spinal{\half}
\def\spinals{\halfs}
\def\halfs{spinal pairs }
\def\reals{\mathbb R}
\def\rationals{\mathbb Q}
\def\complex{\mathbb C}
\def\naturals{\mathbb N}
\def\integers{\mathbb Z}
\def\id{\text{id}}
\def\Chi{\raise1.5pt \hbox{$\chi$}}
\def\cr{\tt\large}

\def\proj{P}
\def\hyp {\hbox {\rm {H \kern -2.8ex I}\kern 1.15ex}}

\def\Diff{\text{Diff}}

\def\weight#1#2#3{{#1}\raise2.5pt\hbox{$\centerdot$}\left({#2},{#3}\right)}
\def\intr{{\rm int}}
\def\inter{\ \raise4pt\hbox{$^\circ$}\kern -1.6ex}
\def\Cal{\cal}
\def\from{:}
\def\inverse{^{-1}}
\def\Max{{\rm Max}}
\def\Min{{\rm Min}}
\def\fr{{\rm fr}}
\def\embed{\hookrightarrow}
\def\Genus{{\rm Genus}}
\def\Z{Z}
\def\X{X}

\def\roster{\begin{enumerate}}
\def\endroster{\end{enumerate}}
\def\intersect{\cap}
\def\definition{\begin{defn}}
\def\enddefinition{\end{defn}}
\def\subhead{\subsection\{}
\def\theorem{thm}
\def\endsubhead{\}}
\def\head{\section\{}
\def\endhead{\}}
\def\example{\begin{ex}}
\def\endexample{\end{ex}}
\def\ves{\vs}
\def\mZ{{\mathbb Z}}
\def\M{M(\Phi)}
\def\bdry{\partial}
\def\hop{\vskip 0.15in}
\def\hip{\vskip0.1in}
\def\mathring{\inter}
\def\trip{\vskip 0.09in}
\def\PML{\mathscr{PML}}
\def\J{\mathscr{J}}
\def\G{\mathscr{G}}
\def\H{\mathscr{H}}
\def\C{\mathscr{C}}
\def\S{\mathscr{S}}
\def\S{\mathscr{S}}
\def\CT{\mathscr{CT}}
\def\WS{\mathscr{WS}}
\def\PS{\mathscr{PS}}
\def\I{\mathscr{I}}
\def\PI{\mathscr{PI}}
\def\T{\mathscr{T}}
\def\PT{\mathscr{PT}}
\def\WT{\mathscr{WT}}
\def\PWT{\mathscr{PWT}}
\def\E{\mathscr{E}}
\def\K{\mathscr{K}}
\def\L{\mathscr{L}}
\def\PC{\mathscr{PC}}
\def\PWC{\mathscr{PWC}}
\def\W{\mathscr{W}}
\def\WC{\mathscr{WC}}
\def\PWC{\mathscr{PWC}}
\def\RW{\mathscr{RW}}
\def\RC{\mathscr{RC}}
\def\PL{\mathscr{PL}}
\def\PLI{\mathscr{PL}{\it i}}
\def\PBLI{\mathscr{PBL}{\it i}}
\def\LI{\mathscr{L}{\it i}}
\def\CB{\mathscr{CB}}
\def\B{\mathscr{B}}
\def\PCB{\mathscr{PCB}}
\def\PT{\mathscr{PT}}
\def\W{\mathscr{W}}
\def\MC{\mathscr{MC}}

\def\PMF{\mathscr{PMF}}
\def\OO{\mathscr{O}}
\def\OM{\mathscr{OM}}
\def\POM{\mathscr{POM}}
\def\IM{\mathscr{IM}}
\def\PIM{\mathscr{PIM}}
\def\BIM{\mathscr{BIM}}
\def\PBIM{\mathscr{PBIM}}
\def\OMR{\mathscr{OMR}}
\def\POMR{\mathscr{POMR}}
\def\Preals{\mathscr{P}\mathbb R}
\def\PAA{\P\hskip -1mm\left[\reals^\A\times\reals^\A\right]}
\def\P{\mathscr{P}}
\def\suchthat{|}
\newcommand{\bigins}{\mathop{\mathlarger{\mathlarger\circledwedge}}}
\newcommand\invlimit{\varprojlim}
\newcommand\congruent{\equiv}
\newcommand\modulo[1]{\pmod{#1}}
\def\ML{\mathscr{ML}}
\def\Stack{\mathscr{T}}
\def\M{\mathscr{M}}
\def\A{\mathscr{A}}
\def\R{\mathscr{R}}
\def\union{\cup}
\def\atlas{\mathscr{A}}
\def\Int{\text{Int}}
\def\frontier{\text{Fr}}
\def\composed{\circ}

\def\abs{\odot}
\def\DS{\breve S}
\def\DL{\breve L}
\def\DBL{\breve{\bar L}}
\def\II{[0,\infty]}
\def\equiv{\hskip -3pt \sim}
\def\chim{\chi_-}

\def\split{\prec}
\def\pinch{\succ}
\def\OB{\mathbb O}
\def\FB{\mathbb F}
\def\SB{\mathbb S}

\def\TB{\mathbb T}
\def\OBB{{\mathbb S}\kern -6pt\raisebox{1.3pt}{--} \kern 2pt}
\def\Infty{\hbox{$\infty$\kern -8.1pt\raisebox{0.2pt}{--}\kern 1pt}}
\def\ens{\bar\circledwedge}
\def\ins{\circledwedge}
\def\rins{\circledvee}
\def\rens{\bar\circledvee}
\def\isom{\cong}
\def\bov{{\bf v}}
\def\boa{{\bf a}}
\def\boc{{\bf c}}
\def\bob{{\bf b}}
\def\bow{{\bf w}}
\def\bou{{\bf u}}
\def\boy{{\bf y}}
\def\bor{{\bf r}}
\def\bot{{\bf t}}
\def\boq{{\bf q}}
\def\boz{{\bf z}}
\def\boxx{{\bf x}}
\def\bop{{\bf p}}
\def\bos{{\bf s}}

\begin{abstract}  We generalize H. Seifert's algorithm for finding a Seifert surface for a knot or link.   The generalization applies to ``framed oriented measured lamination links."   For knots, a Seifert surface determines a unique framing. In our setting, we analyze the set of framed lamination links which bound Seifert laminations and are carried by an $S^1$-fibered tube neighborhood of an oriented train track embedded in a 3-manifold.   \end{abstract}

\section{Introduction.}

The paper \cite{UO:LamLinks} introduced the notion of a ``lamination link."  There is an algorithm due to Herbert Seifert, \cite{HS:SeifertSurface} for finding a Seifert surface for an oriented link.   In this paper, we present a generalization which applies to lamination links.   

To introduce lamination links and their Seifert laminations, we will briefly present an example.  The example introduces some of the main ideas related to lamination links, but the reader may need to refer to \cite{UO:LamLinks} for precise definitions.   There is a similar slightly more interesting example of a lamination link in \cite{UO:LamLinks}.   

\begin{example} \label{IntroEx} Figure \ref{SeifertLam2} shows an oriented branched surface $B$ in $S^3$.  We can assign weights to the sectors of the branched surface satisfying the branch equations at branch curves.  Suppose the weights are $x,y,$ as shown, and the weight vector is $\bov=(x,x+y,y)$ as shown.  If we assign integer weights $x,y$, $B(\bov)$ is an oriented surface whose boundary is an oriented link (with a framing).  Even if the weights are rationally related, up to projective equivalence of weight vectors, $\bdry B(\bov)$ represents an oriented link.  But there are, of course weight vectors satisfying the switch equation whose entries are not rationally related.  For example, we could choose $\bov=(1,1+\sqrt 2,\sqrt 2)$.   Such a  weight vector represents a ``Seifert lamination" whose boundary is an oriented measured lamination link which is not a classical link.  

\begin{figure}[H]
\centering
{\includegraphics{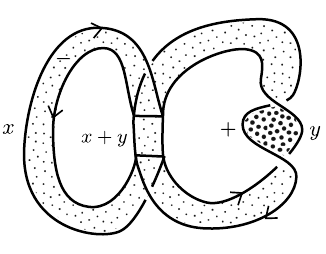}}
\caption{\footnotesize The branched surface $B$ carrying Seifert laminations for framed lamination links.}
\label{SeifertLam2}
\end{figure}

\begin{figure}[H]
\centering
{\includegraphics{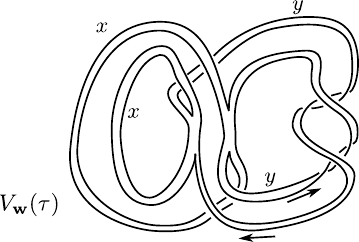}}
\caption{\footnotesize The lamination link.}
\label{LamLink2}
\end{figure}

To picture the laminations, it is best to replace $B$ by a certain fibered neighborhood we call $V(B)$ as shown in Figure \ref{BranchedNeighborhood}.  There is a projection $\pi:V(B)\to B$, and $\pi\inverse(\bdry B)$ is a 2-dimensional train track neighborhood which we will call $V(\tau)$, where $\tau=\bdry B\embed S^3$.  (In this paper the symbol $\embed$ means ``embedded in.")   $V(\tau)$ is a ``framing" of the train track $\tau$ in $S^3$, which is determined here by the embedding of $B$, though in general a framing can be chosen at random.  Informally, the framing of $\tau$ shown in Figure \ref{LamLink2} is obtained by fattening the branched surface $B$ to get $V(B)$; then $V(\tau)$ is the ``edge" of the fattened branched surface.  The weights on sectors of $B$ give weights on the train track $\tau$ which satisfy switch equations and yield an invariant weight vector $\bow$ for $\tau$.  We can imagine the knotted lamination as $V(\tau)$ with the width of different parts of $V(\tau)$ given by the weights of an invariant weight vector $\bow$, see the figure.  The framed train track $\tau$ together with the invariant weight vector $\bow$ for $\tau$ determines a ``prelamination" $V_\bow(\tau)$.   This is a singular foliation of $V(\tau)$, see  Figure \ref{BranchedNeighborhood}, with leaves transverse to fibers of $V(\tau)$ and with a transverse measure such that the measure of a fiber corresponding to a point in the interior of a sector of $\tau$ equals the weight $w_k$ assigned by $\bow$ to the sector.  If $s$ is a sector of $\tau$, $\pi\inverse(\intr(s))$ is foliated as a product and $V_\bow(\tau)$ has a singular foliation (containing the product foliations) as shown in Figure \ref{BranchedNeighborhood}, with singularities at the cusps of $\bdry V(\tau)$.  In much the same way the invariant weight vector $\bov$ for $B$ determines a prelamination representing the Seifert lamination which we denote $B(\bov)$.

\begin{figure}[H]
\centering
\scalebox{1}{\includegraphics{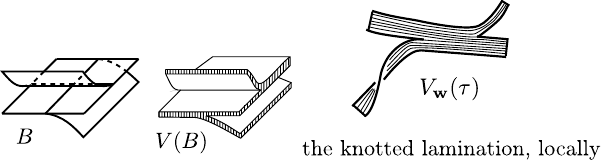}}
\caption{\footnotesize The branched surface $B$, its neighborhood $V(B)$, and the knotted lamination represented as $V_\bow(\tau)$ with weights giving ``widths."}
\label{BranchedNeighborhood}
\end{figure}
\end{example} 

Our goal is to start with a lamination link and produce a Seifert lamination.  As in the classical case, a framed link does not necessarily bound a Seifert lamination.   In fact, only one framing of a classical knot is realized as the boundary of a Seifert surface.   So we must allow for some kind of ``change of framing."

In order to describe our generalized Seifert algorithm, we use a different kind of representation of a lamination link.   Given an invariant weight vector $\bow$ on a train track $\tau\embed S^3$ (see for example Figure \ref{TrainTrack}), we we will describe a large family of lamination links, parametrized by additional real numbers.

\begin{figure}[H]
\centering
\scalebox{0.7}{\includegraphics{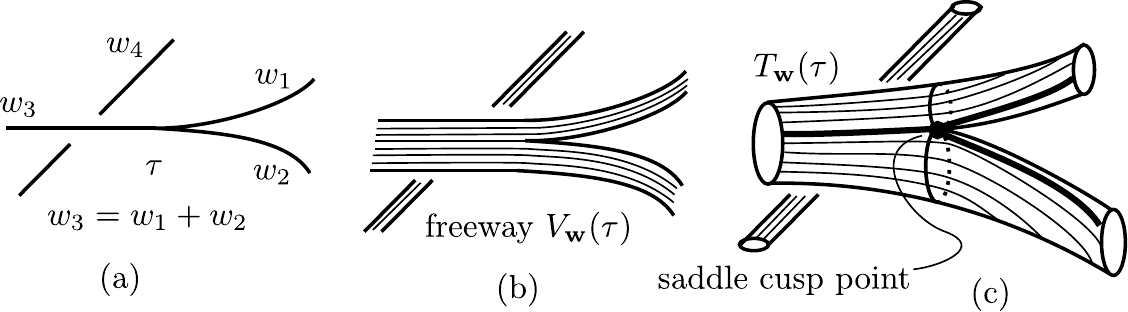}}
\caption{\footnotesize (a) A train track projection in $S^3$ with an invariant vector of weights.  (b) The freeway realization as a lamination link.   (c)  The fibered tube neighborhood realization of the same link.  }
\label{TubeTrack}
\end{figure}

When studying lamination links in $S^3$ or $\reals^3$, we consider a train track with a {\it projection} onto $S^2$ or the plane, as in Figure \ref{TubeTrack}(a).   The train track $\tau$ is divided into {\it sectors}, homeomorphic to intervals or closed curves, by the switch points of $\tau$.  Interval sectors are called {\it segments}. If the train track $\tau$ has an invariant weight vector $\bow$ assigning a weight $w_i$ to the sector $s_i$, then we replace the segment by a foliated product {\it without twisting} when viewed in the projection, where the leaves of the foliation have total transverse measure $w_i$.   The result is the {\it freeway} $V_\bow(\tau)$ associated to the projection, see Figure \ref{TubeTrack}(b).  In the freeway $V_\bow(\tau)$, each sector $s_i$ is represented by the foliated product $s_i\times [0,w_i]$ without half-twists in the projection.  If in $V_\bow(\tau)$ we identify opposite sides of these products, i.e. identify $s_i\times \{0\}$ with $s_i\times \{w_i\}$ for each $i$, we obtain tubes of the form $S^1\times s_i$ foliated by leaves of the form $\{x\}\times s_i$ and the tubes are joined as shown in Figure \ref{TubeTrack}(c) to obtain $T_\bow(\tau)$, representing the same lamination link.  Corresponding to each switch point on $\tau$ there is a {\it saddle cusp point} on $T_\bow(\tau)$.   In the projection we assume that the identified boundaries of $V_\bow(\tau)$ form a train track (the same as $\tau$), always on the top of each tube, as shown, except at saddle cusp points.  If we ignore the measures and the leaves of $T_\bow(\tau)$, $T(\tau)$ is a {\it fibered tube neighborhood} with ``vertical" $S^1$ fibers and a projection $\pi:T(\tau)\to \tau$ which collapses $S^1$ fibers.  For a non-switch point $x$, the corresponding fiber $\pi\inverse(\{x\})$ is a circle, at a switch point $x$ the fiber is a figure eight.   A fibered tube neighborhood is not a tubular neighborhood!  If $N(\tau)$ is a tubular neighborhood we give it the structure of a {\it fibered neighborhood}, with a projection $\pi:N(\tau)\to \tau$.  Here the fiber over a non-switch point is a disk and the fiber over a switch point  is the wedge of two disks at boundary points, such that $\bdry N(\tau)=T(\tau)$, and a fiber ot $T(\tau)$ is the intersection of a fiber of $N(\tau)$ with $T(\tau)$.    A {\it lamination is carried by $T(\tau)$} if it can be embedded transverse to fibers of $T(\tau)$.   It is {\it fully carried} if it intersects every $S^1$ fiber.  We orient $T(\tau)$ such that the outward transverse orientation (pointing out of $N(\tau)$) on $T(\tau)$ is consistent with the orientation of $T(\tau)$.  

Now we observe that there are many other lamination links fully carried by $T(\tau)$ and inducing the same invariant weight vector on $\tau$.   These are obtained by introducing a real-valued Dehn twist (positive or negative) depending on a parameter $t_i\in \reals$ on the tube corresponding to each segment or closed curve $s_i$ of $\tau$, as shown in Figure \ref{Twist}.  The convention for the sign and magnitude are arbitrary, but we will measure the twist by how much a point  on the positive side of the cutting curve is moved relative to the measure on the negative side of the cutting curve.   We make the convention that in the figure the parameter is positive.   We denote the vector of twist parameters $\bot$, and we denote the resulting link $T_{\bow,\bot}(\tau)$.   Once again, we emphasize that our parameters only make sense when we have chosen a projection of $\tau$.  The projection defines the lamination with twist parameters equal to 0, such that $T_{\bow,{\bf 0}}(\tau)=T_\bow(\tau)$.

\begin{figure}[H]
\centering
\scalebox{0.7}{\includegraphics{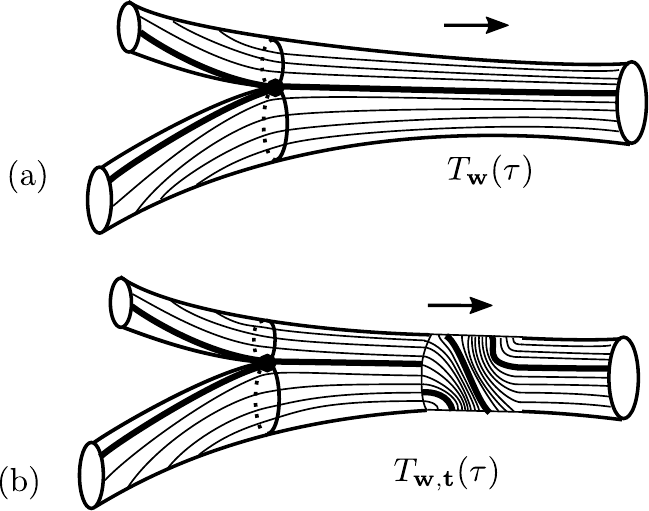}}
\caption{\footnotesize Introducing twists. }
\label{Twist}
\end{figure}

We can change any $T_{\bow,\bot}(\tau)$ to some prelamination $V_\bou(\rho)$ for some train track $\rho$ and invariant weight vector $\bou$ on $\rho$ by splitting on compact arcs in leaves of the singular foliation $T_{\bow,\bot}(\tau)$ emanating from saddle cusp points.

  For every sector of $\tau$, we choose a fiber $\gamma_i$ projecting to a point in the interior of the sector.   We orient $\gamma_i$ such that at the point of $\gamma_i\subset T(\tau)$ the orientation of $\tau$ followed by the orientation of $\gamma_i$ gives the orientation of $T(\tau)$.

The questions we are addressing in this paper are the following.   

\begin{questions}
Given a train track $\tau$ in $S^3$ with an invariant weight vector $\bow$, is it possible to find  framed link $T_{\bow,\bot}(\tau)$ which bounds a Seifert lamination?   If so, is it possible to construct the Seifert lamination explicitly?   Fixing $\bow$, what is the set of $\bot$ such that  $T_{\bow,\bot}(\tau)$ bounds a Seifert lamination in $S^3\setminus N(\tau)$?
\end{questions}

For example, given a train track $\tau$ presented as a projection, with invariant weight vector $\bow$, as shown in Figure \ref{TrainTrack}  (an example from \cite{UO:LamLinks}), can one find a $T_{\bow,\bot}(\tau)$ which bounds a Seifert lamination and construct the Seifert lamination explicitly?  We should point out that in general $T_{\bow,\bot}$ may be equivalent for different values of $(\bow,\bot)$.

\begin{figure}[H]
\centering
\scalebox{0.7}{\includegraphics{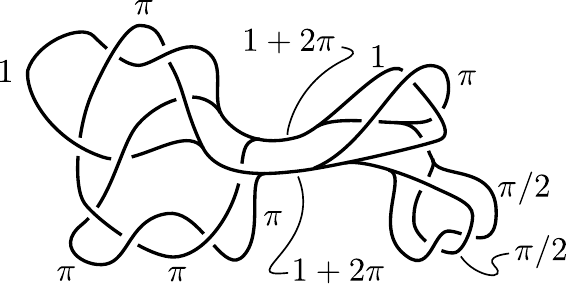}}
\caption{\footnotesize A train track embedded in $S^3$ with an invariant vector of weights.}
\label{TrainTrack}
\end{figure}

\begin{thm}\label{Seifert} [Generalized Seifert algorithm] Suppose $\tau\embed S^3$ is a train track embedded in $S^3$ and $\bow$ is an invariant weight vector  for $\tau$ having all entries positive.   Then there exists a twist parameter vector $\bot$ such that the link  $T_{\bow,\bot}(\tau)$ bounds a Seifert lamination in the complement of the fibered neighborhood $N(\tau)$ of $\tau$.   There is an explicit algorithm to find $\bot$ and the Seifert lamination.   If $\bow$ has all integer entries, the algorithm yields integer entries for $\bot$ and a Seifert surface in the complement of $N(\tau)$.
\end{thm}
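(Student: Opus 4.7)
The plan is to mimic Seifert's classical algorithm, adapted from oriented link projections to a projection of the weighted oriented train track. First, fix the projection of $\tau\embed S^3$ onto $S^2$ and perform a Seifert smoothing at every projection crossing, locally replacing each crossing by the two non-crossing arcs prescribed by the orientation. Because this operation is supported away from the switch points of $\tau$, the result is a planar oriented train track $\tau^* \subset S^2$ with the same switch points as $\tau$, and $\bow$ descends to an invariant weight vector $\bow^*$ on $\tau^*$.

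Next, fill each complementary region of $S^2\setminus\tau^*$ with a measured foliation. Each such region $R$ has boundary in $\tau^*$ with weights inherited from $\bow^*$; because the switch equations balance incoming and outgoing weights at every switch on $\bdry R$, the boundary data determine an interval-exchange which can be suspended across $R$ as a measured foliation by arcs transverse to $\bdry R$ realizing the prescribed transverse measure. Stack these foliated regions in $S^3\setminus N(\tau)$ at heights compatible with the nesting of the regions in $S^2$ and with the over/under information recorded at the original crossings; this is the analogue of placing Seifert disks at different levels.

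To reassemble the stacked foliated regions into a single Seifert lamination, insert a \emph{twisted band} at each original crossing: the band carries a product measured foliation of width equal to the weight of the crossing strand, and its two ends attach to the foliated regions sitting above and below the crossing. Reconnecting the two Seifert-smoothed arcs forces the band to carry a half-twist whose sign is determined by the crossing type. For each segment $s_i$ of $\tau$, summing the half-twist contributions from the crossings through which $s_i$ passes produces the twist parameter $t_i$. By construction the boundary of the resulting Seifert lamination is precisely $T_{\bow,\bot}(\tau)$, and if $\bow$ is integral then so is $\bot$ and the lamination is an honest Seifert surface.

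The main obstacle is the bookkeeping at switch points, which survive the smoothing and appear as corners of adjacent foliated regions. One must verify that the measured foliations on two regions sharing a common arc of $\tau^*$ glue to yield a continuous measured lamination, including across these switch-corners. This compatibility reduces exactly to the invariance of $\bow$ under the switch equations, so it is automatic once the interval-exchange suspensions on each $R$ are chosen consistently. The remaining verification, that the crossing-by-crossing twist bookkeeping really realizes the framed lamination as $T_{\bow,\bot}(\tau)$ for the computed $\bot$, is a local calculation at each band and contributes the explicit algorithm claimed in the statement.
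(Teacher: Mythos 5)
Your overall strategy---project, smooth the crossings, build an unknotted Seifert lamination for the planar picture, then reattach twisted bands and read off the twist parameters---is the same as the paper's. But two of your steps break down precisely at the point where the measured-lamination setting differs from the classical one, namely where the two strands at a crossing carry \emph{different} weights $w_1\neq w_2$. First, your claim that the oriented smoothing is ``supported away from the switch points'' and yields a train track $\tau^*$ with the same switches, to which $\bow$ descends, is false when $w_1\neq w_2$: an oriented smoothing must join an incoming band of width $w_1$ to an outgoing band of width $w_2$, and no product band does this. The paper's resolution is to pinch the two strands into a new \emph{crossing segment} of weight $w_1+w_2$, which necessarily introduces new branch points; your $\tau^*$ does not exist as described, and this omission propagates into your band-reattachment step. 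There, the band you glue back in cannot simply ``carry a half-twist'': because the widths are unequal, undoing the pinch reattaches only part of the wider band and produces a \emph{real-valued} partial twist (a loop over part of the band, with magnitude governed by the weights), and the resulting link is $V_\bou(\rho)$ rather than the original $V_\bow(\tau)$; one must then convert $V_\bou(\rho)$ back to a tube representative $T_{\bow,\bot}(\tau)$. Your ``sum of half-twist contributions'' computes the classical writhe-type framing and does not engage with the weight-dependent, real-valued $\bot$ that the theorem is actually about.

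Second, your construction of the planar Seifert lamination by putting an interval-exchange suspension on each complementary region of $S^2\setminus\tau^*$ does not produce the right object. A Seifert lamination must be a $2$-dimensional measured lamination whose leaves are surfaces bounded by the circle leaves of the planar link; a foliation of a complementary region by arcs transverse to $\partial R$ has leaves that are arcs ending on $\tau^*$, not disks bounded by the Seifert circles, and the complementary regions themselves cannot serve as leaves since adjacent regions induce opposite orientations on a shared arc (this is exactly why Seifert's algorithm uses Seifert circles rather than checkerboard regions). The paper instead observes that the crossing-free planar lamination splits into finitely many product families of circles $S^1\times[0,m_i]$ (found by repeatedly locating a $0$-gon and splitting off a maximal product), caps each with a product of disks $D^2\times[0,m_i]$ pushed below the projection plane, and then re-pinches to get the branched surface $V_{\bov'}(B')$. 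You also identify the switch points as the ``main obstacle,'' but the switch equations make that part automatic; the real work is at the crossings, where the weight mismatch is what forces the change of framing that the theorem asserts.
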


There is a hidden subtlety.  Given parameters $(\bow,\bot)$, it is possible that $T_{\bow,\bot}(\tau)$ bounds a Seifert lamination in $S^3$ which intersects $\intr(N(\tau))$ essentially.   This means that one should be using a different train track $\tau$, as one can see in the analysis of \cite{UO:LamLinks}. In proving the above theorem, we construct a Seifert lamination disjoint from $\intr(N(\tau))$, so we are able to avoid the issue. 

Just as in the classical case, there are easy homological reasons for the existence of a Seifert lamination as in Theorem \ref{Seifert}.  To explain this, we introduce a few more definitions.

\begin{defn}
The non-zero invariant weight vectors on any train track $\tau$ form a cone $\C(\tau)$.   Projectivizing by taking a quotient where $\bow\in \C(\tau)$ is equivalent to $\lambda\bow$, we obtain the {\it weight cell}, $\PC(\tau)$.   Often we use $\PC(\tau)$ to denote a particular subspace of $\C(\tau)$, namely $\PC(\tau)=\{\bow\in \C(\tau):\sum_iw_i=1\}$, where $w_i$ are the entries of $\bow$.
\end{defn}

Next we shall consider lamination links in a rational homology sphere $P$.   Given an oriented train track $\tau\embed P$ and given an invariant weight vector $\bow$ for $\tau$, we shall see in Section \ref{Homology} that it is still possible to define twist parameters for $T(\tau)$ relative to an arbitrary choice which defines zero twist.

In the following statement, a {\it oriented surface with meridional boundary} is an oriented surface each of whose boundary components is isotopic to a fiber of $T(\tau)$.  Notice that if $S$ is an oriented surface with more than two meridional boundary  components isotopic to the same fiber $\gamma_i$ but with opposite orientations, then it is always possible to paste two adjacent such boundary curves to get another surface with meridional boundary and with fewer boundary components.    Thus $S$ can be replaced by another oriented surface with meridional boundary with the property that any two boundary components both isotopic to $\gamma_i$ have consistent orientations.

\begin{proposition}\label{HomologySeifert}  Suppose $P$ is a rational homology sphere.  Suppose $\tau \embed P$ is an embedded oriented train track, let $N=N(\tau)$ be a regular neighborhood of $\tau$ and let $M=P\setminus \intr(N)$.  Suppose $h:(M,\bdry M)\to (P,N)$ is the inclusion and $h|_{\bdry M}=j$.  Then, there is a sequence of maps with isomorphisms as shown:

$$\C(\tau)\xrightarrow{g} H_1(\tau;\reals)\xleftarrow[\approx]{\pi_*} H_1(N;\reals))\xleftarrow[\approx]{\bdry}H_2(P,N;\reals))\xleftarrow[\approx]{h_*} H_2(M,\bdry M;\reals))\xrightarrow{\bdry}H_1(\bdry M;\reals))$$ $$\xrightarrow{j_*}H_1(N;\reals))\xrightarrow [\approx]{\pi_*}  H_1(\tau;\reals)) ,$$

\noindent where $g$ is a linear injection which converts an invariant weight vector on $\tau$ to an element of $H_1(\tau)$.   Also $\bdry h_*=j_*\bdry$.   It follows that there exists a twist parameter vector $\bot$ such that the link  $T_{\bow,\bot}(\tau)$ bounds a Seifert lamination.   It also follows that if $S$ is an oriented surface with meridional boundary then $[S]=0$ in $H_2(M,\bdry M;\reals)$ and $[\bdry S]=0$ in $H_1(\bdry M;\reals)$.
\end{proposition}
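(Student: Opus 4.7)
The plan is to verify each labeled map in the displayed sequence, confirm the naturality identity $\bdry h_*=j_*\bdry$, and then use the resulting diagram geometrically. The map $g$ sends $\bow=(w_i)$ to the cellular $1$-cycle $\sum_i w_i\vec s_i$ on $\tau$; it is linear, and injective because the weights are recovered as the coefficients of a cycle on an oriented $1$-complex. The retraction $\pi\from N\to\tau$ gives the isomorphism $\pi_*$. The long exact sequence of the pair $(P,N)$ together with $H_2(P;\reals)=H_1(P;\reals)=0$ (because $P$ is a rational homology sphere) sandwiches $\bdry\from H_2(P,N;\reals)\to H_1(N;\reals)$ between zeros, making it an isomorphism. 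Excision of an open neighborhood $U$ of $\tau$ properly contained in $\intr N$, followed by the deformation retracts $P\setminus U\to M$ and $N\setminus U\to\bdry M$, identifies $h_*$ as an isomorphism. The identity $\bdry h_*=j_*\bdry$ is the naturality of the connecting homomorphism applied to the map of pairs $h\from(M,\bdry M)\to(P,N)$.

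Next I would produce $\bot$. Define $\alpha(\bow):=h_*^{-1}\bdry^{-1}\pi_*^{-1}g(\bow)\in H_2(M,\bdry M;\reals)$ and $c(\bow):=\bdry\alpha(\bow)\in H_1(\bdry M;\reals)$. Naturality gives
$$j_*c(\bow)=\bdry h_*\alpha(\bow)=\pi_*^{-1}g(\bow).$$
On the other hand, for any twist $\bot$ the oriented measured lamination $T_{\bow,\bot}(\tau)$ represents a $1$-cycle on $\bdry M$ whose image under $\pi_*\composed j_*$ is precisely $g(\bow)$, because collapsing $S^1$-fibers sends the lamination to $\tau$ weighted by $\bow$. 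Hence $[T_{\bow,\bot}(\tau)]-c(\bow)\in\ker j_*$. Varying the $i$-th twist coordinate by $\Delta t_i$ changes $[T_{\bow,\bot}(\tau)]$ by $\Delta t_i$ times the meridian of segment $s_i$, and every such meridian lies in $\ker j_*$ since it bounds a fiber disk in $N$. Mayer-Vietoris for $P=M\union N$ with real coefficients provides the isomorphism $(i_*,j_*)\from H_1(\bdry M;\reals)\to H_1(M;\reals)\oplus H_1(N;\reals)$, whence $\ker j_*\isom H_1(M;\reals)$, and Alexander-type duality in the rational homology sphere shows the segment meridians span this space. One may therefore choose $\bot$ with $[T_{\bow,\bot}(\tau)]=c(\bow)$; then $T_{\bow,\bot}(\tau)$ bounds the class $\alpha(\bow)$, and a concrete Seifert lamination representing $\alpha(\bow)$ is supplied by the construction of Theorem \ref{Seifert} (in the integer case, by a classical embedded Seifert surface).

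For meridional surfaces, if $S$ has meridional boundary then each component of $\bdry S$ bounds a fiber disk in $N$, so $j_*[\bdry S]=0$ in $H_1(N;\reals)$. Naturality then gives $\bdry h_*[S]=j_*\bdry[S]=0$, and because $\bdry$ and $h_*$ are isomorphisms one concludes $[S]=0$ in $H_2(M,\bdry M;\reals)$, whence $[\bdry S]=\bdry[S]=0$ in $H_1(\bdry M;\reals)$. The main obstacle in the argument is the surjectivity of the twist-to-meridian map onto $\ker j_*$: the dimensions match via $\ker j_*\isom H_1(M;\reals)$, but establishing that the meridians of the segments of $\tau$ span $H_1(M;\reals)$ modulo the linear relations at each switch is what requires the Alexander-duality type argument in $P$; every other step is a routine application of naturality, excision, and the long exact sequence.
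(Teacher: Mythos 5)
Your verification of the maps in the sequence --- the cellular definition and injectivity of $g$, the homotopy equivalence giving $\pi_*$, the long exact sequence of $(P,N)$ giving $\bdry$, excision giving $h_*$, and naturality giving $\bdry h_*=j_*\bdry$ --- matches the paper, and your argument for the vanishing of $[S]$ and $[\bdry S]$ for a meridional surface is essentially identical to the paper's. The divergence is in how you produce $\bot$. The paper does not pass through $\ker j_*$ at all: it fixes the cell structure on $\bdry M=T(\tau)$ described in Section \ref{Homology}, whose entire 1-skeleton is $e(\tau)\cup\bigcup_i\gamma_i$, so that any cellular representative of $\bdry\alpha$ is forced to have the form $\sum_i u_is_i+\sum_i t_i\gamma_i$; the identity $\pi_*j_*\bdry\alpha=[\tau(\bow)]$ in $H_1(\tau)=Z_1(\tau)$ then forces $u_i=w_i$, and the $t_i$ are read off directly as the twist parameters. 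Your alternative --- showing $[T_{\bow,\bot}(\tau)]-c(\bow)\in\ker j_*$ and then arguing that the meridians span $\ker j_*$ --- can be made to work, but the justification you give (``Alexander-type duality'') is not the relevant fact and is left unproved; the clean statement is that the fibers of $N(\tau)$ over the segment midpoints form a complete meridian disk system for the handlebody $N$ (cutting along them leaves balls), and the boundaries of a complete disk system span $\ker(H_1(\bdry N)\to H_1(N))$. That requires nothing about $P$.

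The genuine gap is the final step. Having arranged $[T_{\bow,\bot}(\tau)]=\bdry\alpha(\bow)$ in $H_1(\bdry M)$, you still need an actual properly embedded oriented measured lamination in $M$ whose boundary is the lamination link $T_{\bow,\bot}(\tau)$ itself, not merely something homologous to it. You outsource this to ``the construction of Theorem \ref{Seifert},'' but that theorem is stated and proved only for $\tau\embed S^3$ and its algorithm depends on a planar projection, so it is unavailable in a general rational homology sphere $P$; moreover, even in $S^3$ it produces some twist vector of its own, with no guarantee that it coincides with the $\bot$ you selected homologically. The paper closes this gap differently: it chooses a triangulation of $M$ whose 1-skeleton contains the chosen 1-skeleton of $\bdry M$, represents $\alpha$ by a relative simplicial cycle whose boundary is exactly the chain $\sum_i w_is_i+\sum_i t_i\gamma_i$, and desingularizes that relative cycle to obtain an oriented measured lamination $B(\bov)$ with $\bdry B(\bov)=T_{\bow,\bot}(\tau)$. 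You need some version of that representability argument to finish the proof.
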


 The reasoning in the previous proposition is useful for another reason.  For a classical knot in $S^3$, there is a unique framing (twist parameter) given by the boundary of a Seifert surface.   For lamination links, as we will show, in general for a fixed invariant weight vector $\bow$ there will be many different twist parameter vectors $\bot$ yielding links $T_{\bow,\bot}(\tau)$ which bound Seifert laminations.   This is true even for typical lamination knots, which are connected links.  
 
 To describe the set of all lamination links carried by $T(\tau)$ which bound Seifert laminations in $M=P\setminus \intr(N(\tau))$, we need an operation on pairs (or finite sets) of oriented 2-dimensional measured laminations embedded in a 3-manifold $M$, which generalizes ``oriented cut-and-paste" for surfaces.    Given two oriented measured laminations represented as $V_\bou(B)$ and $V_\bov(C)$ carried by oriented branched surfaces $B\embed M$ and $C\embed M$ which are transverse to each other and in general position, we construct a new measured lamination as follows.  Pinch $B\cup C$ along neighborhoods of the intersection train track to obtain a new oriented branched surface $A$, where $B$ and $C$ are immersed in $A$.   Then the invariant weight vectors $\bou$ and $\bov$ yield a new invariant weight vector $\boy$ on $A$ obtained by adding weights where $B$ and $C$ were pinched.  The {\it oriented combination} of $V_\bou(B)$ and $V_\bov(C)$ is $V_\boy(A)$. This process can be iterated to perform oriented cut and paste on more than two laminations.  The result is far from unique;  isotoping $B$ and/or $C$ to a different position yields a different result.   However, if there is a train track $\rho$ in $\bdry M$ such that $\bdry B$ and $\bdry C$ are each embedded in an oriented train track $\rho\embed\bdry M$, then the invariant weight vector induced on $\rho$ is determined.   A {\it weighted oriented surface with weight $r$} is an oriented measured lamination $V_r (S)$, where the weight vector in this case has a single entry $r\in \reals$.   We can, of course take oriented combinations involving weighted oriented surfaces.

 Suppose $S$ is an oriented surface with meridional boundary in $M$.   The orientation of a component of $\bdry S$ may be the same or opposite to the orientation of the fiber of $T(\tau)=\bdry M$.   If a representative circle fiber associated to a sector of $\tau$ is denoted $\gamma_i$, we associate to $S$ a twist parameter $\bou$ whose $i$-th entry is $u_i$, the total number of boundary components of $S$ isotopic to $\gamma_i$ counted with signs according to whether the orientation agrees or disagrees with that of $\gamma_i$.   We let $k$ be the number of sectors of $\tau$, so we have meridional curves $\gamma_i$, $i=1,2,\ldots,k$.  
 
  \begin{thm}\label{AllSeifert}  Suppose $P$ is a rational homology sphere and $\tau \embed S^3$ is an embedded train track with $k$ sectors.   Suppose $T_{\bow,\bot}(\tau)$ bounds a Seifert lamination $V_\bov(B)$ in $M=P\setminus\intr(N(\tau))$.   Then if $S$ is any oriented surface with meridional boundary $\bou$, any oriented combination of $V_r (S)$with $V_\bov(B)$ is a Seifert lamination for $T_{\bow,\bot+r\bou}(\tau)$.  Conversely, if $T_{\bow,\bos}(\tau)$ bounds a Seifert lamination in $M$, then there is a Seifert lamination $V_\boxx(E)$ for $T_{\bow,\bos}(\tau)$ which can be obtained by oriented combination of $V_\bov(B)$ with finitely many weighted oriented surfaces of the form $V_{r\bou}(S)$ with meridional boundary.   It follows that for a fixed $\bow$, the set of $\bot$ such that lamination $T_{\bow,\bot}(\tau)$  bounds a Seifert lamination in $M$ is a hyperplane of some dimension in $\reals^k$. \end{thm}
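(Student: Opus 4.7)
The plan is to establish the theorem in three parts: the forward implication that oriented combination with weighted meridional surfaces yields a Seifert lamination with shifted twist parameter; the converse that every $\bos$ for which $T_{\bow,\bos}(\tau)$ bounds a Seifert lamination can be reached this way; and the hyperplane conclusion.

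For the forward direction, I would place $B$ and $S$ in general position inside $M$, using the product collar of $\bdry M=T(\tau)$ to arrange that $B\cap S$ lies in the interior of $M$. The oriented combination construction then produces an oriented branched surface $A$ in which $B$ and $S$ are immersed, with a new invariant weight vector $\boy$ obtained by adding the pinched weights $\bov$ and $r$. Because $B\cap S$ is disjoint from $\bdry M$, the boundary of $V_\boy(A)$ on $T(\tau)$ is literally the disjoint union of $\bdry V_\bov(B)=T_{\bow,\bot}(\tau)$ and $\bdry V_r(S)$. The latter boundary is purely meridional with total twist $r\bou$, so the resulting boundary induces invariant weight vector $\bow$ on $\tau$ and twist $\bot+r\bou$, giving $T_{\bow,\bot+r\bou}(\tau)$.

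For the converse, suppose $V_\boxx(E)$ is any Seifert lamination for $T_{\bow,\bos}(\tau)$. The classes $[V_\bov(B)]$ and $[V_\boxx(E)]$ in $H_2(M,\bdry M;\reals)$ have boundaries in $H_1(\bdry M;\reals)$ represented by $T_{\bow,\bot}(\tau)$ and $T_{\bow,\bos}(\tau)$ respectively. Because both induce the same invariant weight vector $\bow$ on $\tau$, the difference $\bdry[V_\bov(B)]-\bdry[V_\boxx(E)]$ has trivial $\tau$-longitudinal component and equals the purely meridional class $\sum_i(t_i-s_i)\gamma_i$. Hence this meridional class lies in the image of $\bdry\from H_2(M,\bdry M;\reals)\to H_1(\bdry M;\reals)$. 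Choosing integer-coefficient oriented surfaces $S_1,\ldots,S_m$ in $M$ whose meridional boundary twist vectors $\bou_1,\ldots,\bou_m$ generate the corresponding integer lattice in the image of $\bdry$, I can write $\bos-\bot=\sum_j r_j\bou_j$ for real numbers $r_j$, and iterating the forward direction yields a Seifert lamination for $T_{\bow,\bos}(\tau)$ obtained as an oriented combination of $V_\bov(B)$ with $V_{r_1}(S_1),\ldots,V_{r_m}(S_m)$.

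The hyperplane statement then follows. Let $W\subseteq\reals^k$ be the real span of twist vectors of meridional boundaries of oriented surfaces in $M$. The forward direction shows that $\bot+W$ is contained in the Seifert-bounding set, and the converse shows the reverse inclusion, so the set equals the affine subspace $\bot+W$, a hyperplane of dimension $\dim W$ in $\reals^k$. The main obstacle I expect is the passage from the real homological statement that $\sum_i(t_i-s_i)\gamma_i$ lies in the image of $\bdry$ to the concrete integer-surface decomposition $\bos-\bot=\sum_j r_j\bou_j$; this requires selecting an integer basis for the meridional sublattice of $\bdry H_2(M,\bdry M;\integers)$ and invoking Proposition \ref{HomologySeifert} together with standard finite-generation facts to conclude that real combinations of these integer surfaces span the real meridional image.
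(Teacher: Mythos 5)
There is a genuine gap in your forward direction. You cannot arrange $B\cap S$ to be disjoint from $\bdry M$: the boundary $\bdry V_r(S)$ is a union of weighted fibers of $T(\tau)$, while $\bdry V_\bov(B)=T_{\bow,\bot}(\tau)$ is fully carried by $T(\tau)$ and therefore transverse to, and meeting, every fiber, so the two boundary laminations necessarily intersect on $\bdry M=T(\tau)$. Moreover, if the boundary of the combination really were the literal disjoint union of $T_{\bow,\bot}(\tau)$ with a family of weighted meridians, it would not be carried by $T(\tau)$ at all and in particular would not equal $T_{\bow,\bot+r\bou}(\tau)$. The twist change is produced precisely by the oriented cut-and-paste along the boundary intersection: the leaves of $T_{\bow,\bot}(\tau)$ are cut where they cross a meridional component of $\bdry S$ and rerouted around that meridian with measure $r$, which is a signed $r$-twist on the corresponding segment; summing over the boundary components of $S$ gives the parameter $\bot+r\bou$. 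This is exactly the (admittedly brief, picture-based) argument in the paper, and without it your conclusion does not follow from your setup.

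Your converse follows the same homological outline as the paper (the difference $[V_\boxx(A)]-[V_\bov(B)]\in H_2(M,\bdry M;\reals)$ has purely meridional boundary $K(0,\bos-\bot)$), but you leave the decisive step as an acknowledged gap, namely passing from ``the meridional class lies in the image of $\bdry$'' to the exact equation $\bos-\bot=\sum_j r_j\bou_j$ with the $\bou_j$ realized by integer surfaces. The paper closes this gap constructively and differently: it realizes the single difference class by one oriented branched surface $E$ with meridional boundary, using the cellular/simplicial desingularization already set up in the proof of Proposition \ref{HomologySeifert} (which pins down the boundary as the specific cycle $\sum_i(s_i-t_i)\gamma_i$, not merely its homology class), then writes $\boy\in\C(E)$ as a real combination of rational invariant weight vectors $\boy_i$ on the same $E$, scales by an integer $q$ so each $V_{q\boy_i}(E)$ is an honest surface $S_i$, and expresses $V_\boy(E)$ as an oriented combination of the weighted surfaces $V_{r_i}(S_i)$. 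Since the cone of invariant weight vectors on a branched surface is cut out by integer branch equations, the rational decomposition always exists; this yields both the finitely many weighted surfaces demanded by the statement and the exact twist vector $\bos-\bot$. You should either adopt this route or supply the lattice-spanning argument you defer; as written, the converse and hence the hyperplane conclusion are incomplete.
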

  
Even for classical links, the above theorem says something:   If $\bow$ has integer entries, then we know by Theorem \ref{Seifert} that there exists $\bot$ with integer entries such that   $T_{\bow,\bot}(\tau)$ bounds a Seifert surface $F$ in $M=P\setminus N(\tau)$.   Suppose the Seifert surface is represented by $F=V_\bov(B)$, where $\bov$ has integer entries.   Then Theorem \ref{AllSeifert} says that if there is a Seifert surface in $M$ for  $T_{\bow,\bos}(\tau)$, $\bos\ne \bot$, then it is  obtained by oriented cut-and-paste of $F$ with oriented surfaces in $M$ having meridional boundary.   Presumably there is not always a {\it minimal genus} Seifert surface for $T_{\bow,\bos}(\tau)$ in $M$.

\section {The algorithm.} \label{algorithm} 

This section describes the generalized Seifert algorithm and the proof of Theorem \ref{Seifert}.
To some extent, we follow the proof in the classical case.

We are given an oriented train track $\tau\embed S^3$ with an invariant weight vector $\bow$.     Without loss of generality, we assume the oriented train track $\tau$ is embedded in $\reals^3$ rather than $S^3$, and we assume $\tau$ is  in general position with respect to a projection to $p:\reals^3\to\reals^2$.  This means crossings are transverse and disjoint from switch points.   
Next we replace $\tau$ by the freeway $V_\bow(\tau)$, which means there are no half-twists when $V_\bow(\tau)$ is viewed in the projection.   See Figure \ref{Freeway}.

\begin{figure}[H]
\centering
\scalebox{0.7}{\includegraphics{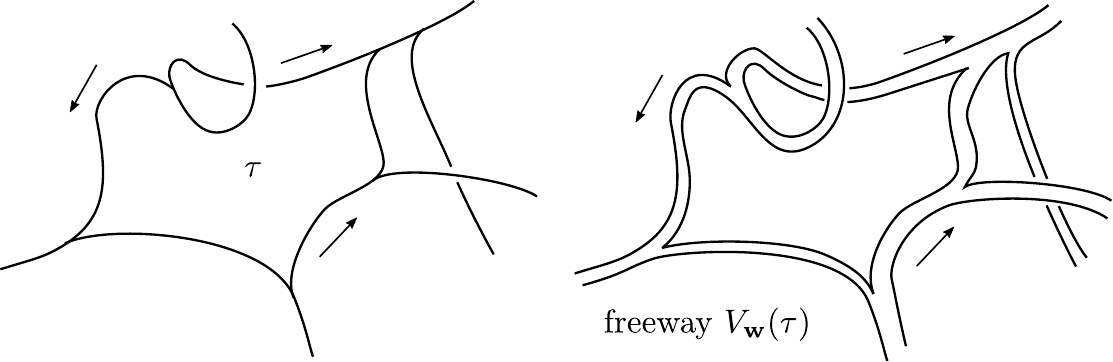}}
\caption{\footnotesize Freeway fibered neighborhood $V_\bow(\tau)$.}
\label{Freeway}
\end{figure}

The first step of the algorithm is to modify $V_\bow(\tau)$ to eliminate crossings.   This is again a measured lamination version of  ``oriented cut-and-paste."  The modification is shown in Figure \ref{CrossingRemove}, and replaces the crossing by a new segment in the train track, assigning a weight equal to the sum of the weights of the segments which cross.  Later in the algorithm, we must remember which segments were obtained by a modification, so we call these segments {\it crossing segments.}  If the two weights at the crossing are equal ($w_1=w_2$ in the figure), one could eliminate the crossing segment by splitting, but we do not.  After eliminating all crossings in this way, we have $V_{\bow'}(\tau')$ embedded in a plane and determining a closed compact oriented measured lamination in the plane.  It follows that the lamination consists of finitely many product families of closed curves.   These families can be obtained by splitting $V_{\bow'}(\tau')$ on compact separatrix leaves joining different cusps.

\begin{figure}[ht]
\centering
\scalebox{0.6}{\includegraphics{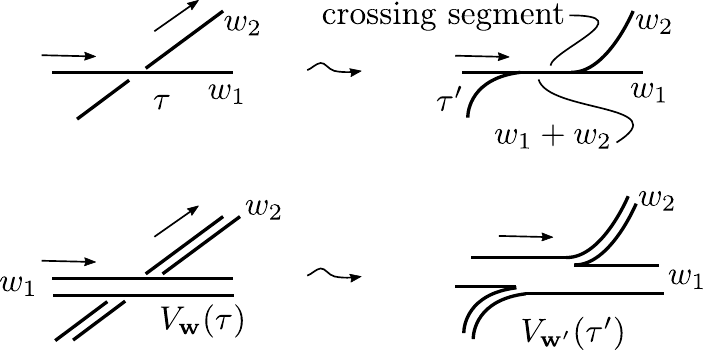}}
\caption{\footnotesize Removing crossings of $V(\tau)$.}
\label{CrossingRemove}
\end{figure}

 We can now construct a 2-dimensionsal Seifert lamination for the modified link without crossings.  The Seifert lamination consists of a union of product families of disks, embedded below the projection plane, with one such product family for each product family of closed curves, such that the boundary of the product of disks gives the product of closed curves.   The product families of closed curves are pinched together in places in $V_{\bow'}(\tau')$, and we can extend the pinching slightly to the product families of disks to obtain a branched surface neighborhood $V_{\bov'}(B')$ satisfying $\bdry V_{\bov'}(B')=V_{\bow'}(\tau')$.  The neighborhood $V_{\bov'}(B')$ is roughly $V_{\bow'}(\tau')$ with all closed curve leaves of capped by disks below the projection plane.
 
In case the reader finds the above description of $V_{\bov'}(B')$ inadequate, we give more details:     Since $V_{\bow'}(\tau')$ has no crossings, we can assume it is embedded in the plane of projection.  Now consider an outermost boundary curve of $V(\tau')$:   It bounds a disk $E$ in the plane of projection with inward boundary cusps.  This implies that the geometric Euler characteristic, $\Chi_g(E)$ is positive.  Hence there is a 0-gon $H$ in the complement $E\setminus \intr(V(\tau'))$.  The orientation on $\tau$ gives a transverse orientation for $\tau$, which orients fibers of $V(\tau)$.   Hence we can find a maximal product $\bdry H\times [0,m]$ in $V_{\bov'}(\tau')$, respecting measures.  Then $\bdry H\times m$ intersects $\bdry_hV(\tau')$ and we can split $V_{\bov'}(\tau')$ on the leaf $\bdry H\times m$.  Removing $\bdry H\times [0,m]$ we obtain $(V(\tau_1'),v_1')$.  Repeating the argument with $(V(\tau_1'),v_1')$, we find a new 0-gon $H$, and split off another product family, to obtain $(V(\tau_2'),v_2')$, and we continue in this way inductively until we have split $V(\tau',v')$ into a finite collection of products of the form $S^1\times [0,m_i]$.   Each of these foliated annuli $S^1\times [0,m_i]$, with transverse measures, bounds a product $D^2\times [0,m_i]$, and the products can be chosen to be disjoint in $\reals^3$, below the plane of projection.   Each product $D^2\times [0,m_i]$ is given an orientation, either a transverse orientation or an orientation on each leaf $D^2\times \{t\}$, such that the induced orientation of $\bdry D^2\times \{t\}$ agrees with the orientation on $\tau$.      Reversing the splittings of $V_{\bov'}(\tau')$ by pinching, and also pinching the disk products in a neighborhood of $V_{\bov'}(\tau')$, we obtain a branched surface neighborhood $V_{\bov'}(B')$ with a measured 2-prelamination obtained from the finite collection of disk products by pinching. 
 
 Our goal now is to modify the Seifert lamination represented by $V_{\bov'}(B')$ to obtain a new Seifert lamination $V_{\bov}(B)$ whose boundary is the original link $V_\bow(\tau)$.   In fact, we cannot do this, and we are forced to settle for something close to the original $V_\bow(\tau)$.    At every crossing segment of $\tau'$, we shall perform an ``inverse modification" which can be extended to the Seifert lamination, but we do not return to the original $V_\bow(\tau)$; rather, we return to a lamination link $V_\bou(\rho)$ which differs by a ``real valued twist" near crossings.   In Figure \ref{Reconstruct}, we show how to modify $V_{\bov'}(B')$ near each crossing segment of $\bdry B' =\tau'$.  Two rectangular regions in $V_{\bow'}(\tau')=\bdry V_{\bov'}(B')$ are shown in Figure \ref{Reconstruct}(a), which we identify as shown.   Doing this at all crossing segments, we obtain  $V_{\bov}(B)$ as shown.  The effect on $V_{\bow'}(\tau')$ is shown in Figure \ref{Reconstruct}(b), yielding $V_\bou(\rho)$.   The figure shows the difference between our original lamination link, represented by $V_\bow(\tau)$ and the new lamination link represented by $V_\bou(\rho)$.   The only difference is a loop introducing a real twist in the lower segment.  Notice that the loops always lie above the lower tape at a crossing, so the loops all lie in a tube neighborhood $T(\tau)$.   There may be more than one loop  over a sector, and the sense of twisting on different loops may not be consistent, so there may be ``cancelling twists."

\begin{figure}[H]
\centering
\scalebox{0.6}{\includegraphics{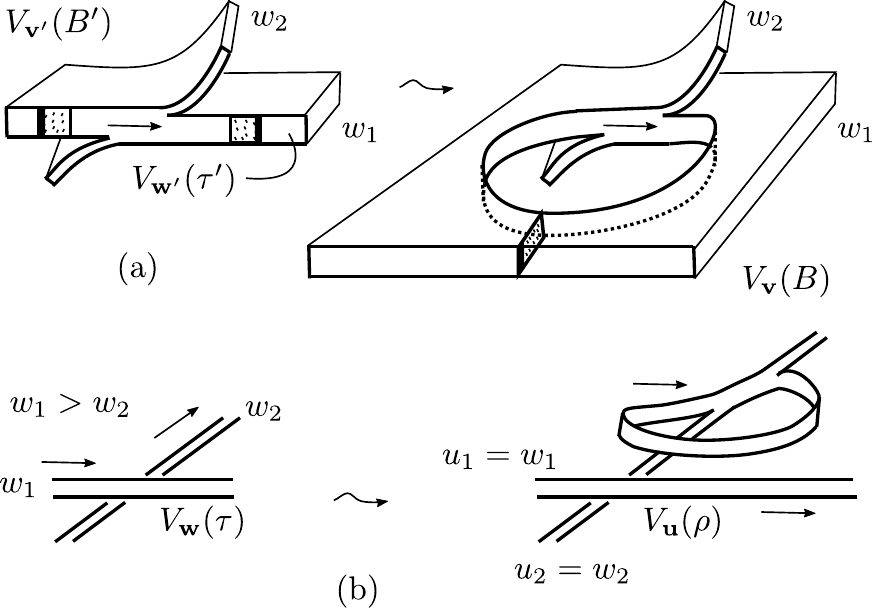}}
\caption{\footnotesize Reconstructing a modified $V_\bow(\tau)$ with a Seifert lamination.}
\label{Reconstruct}
\end{figure}

However, it is always possible to to convert $V_\bou(\rho)$ to a tube neighborhood representative $T_{\bow,\bot}(\tau)$ where the weight vector $\bow$ is unchanged, and the twist parameters, the entries of $\bot$, can be calculated for a particular example.

It remains to consider what happens when all weight entries in $\bow$ are integer weights.  Following through the argument, we see that all 2-laminations involved in the argument are surfaces and the algorithm produces an oriented Seifert surface.

\section {Homological considerations.}\label{Homology}

In this section we explore Seifert laminations via homology.  We always work with a train track $\tau\embed P$ in a rational homology sphere $P$.  Why a rational homology sphere?  Firstly, lamination links carried by $T(\tau)$ represent homology classes with real coefficients in $T(\tau)$, and Seifert laminations represent homology classes with real coefficients in $P\setminus\intr( N(\tau))$.  Secondly, the following very simple example suggests that torsion in $H_1(P;\integers)$ presents no problems for constructing Seifert laminations.

\begin{example}  Suppose $P=\reals P^3$, which is a rational homology sphere.  Let $\tau$ be the union of the 0-cell and the 1-cell in the standard cell decomposition, $\tau$ an oriented simple closed curve.  Then $T(\tau)$ is a torus and the 2-cell in the standard cell decomposition yields a Seifert lamination for $T_{\bow,\bot}(\tau)$ where $\bow=(2)$ and $\bot=(1)$.  To construct the Seifert lamination, replace the 2-cell $E$ by a product $E\times [0,1]\subset M=P\setminus\intr( N(\tau)) $ with $\bdry E\times [0,1]\subset \bdry M$.    This product is a Seifert lamination for $T_{\bow,\bot}(\tau)$

\end{example}

    We assume $\tau$ admits some invariant weight vector $\bow$ with only positive entries.   Let $N=N(\tau)$ denote a regular neighborhood of $\tau$ as described in the introduction, chosen such that $\bdry N= T(\tau)$, and we let $M=P\setminus \intr(N)$.   Finally, we describe a cell structure for $\bdry M=T(\tau)$ which will be crucial for the proof of Proposition  \ref{HomologySeifert}.   We first give a cell structure to $\tau$, using each branch point as a 0-cell and adding one 0-cell on every closed curve component of $\tau$.   This gives the 0-skeleton.    We then add one more ``midpoint" 0-cell in each segment to get a finer cell decomposition of $\tau$.  Note that we do not add a 0-cell to 1-cells in closed curve components of $\tau$

Now we choose an embedding of $e:\tau\to T(\tau)$ such that $\pi\circ e$ is the identity and branch points of $\tau $ coincide with cusp points in $T(\tau)$, see Figure \ref{Cell}.  If $p_i$ is a midpoint of the i-th 1-cell, or the 0-cell in a closed curve component of $\tau$, we attach a {\it meridional} 1-cell  $\gamma_i$ in $T(\tau)$ which is a fiber in $T(\tau)$.   We orient the cells in $e(\tau)$ to agree with the orientation of $\tau$.  We orient $\gamma_i$ such that at the point $e(\tau)\cap \gamma_i$ the orientation of $e(\tau)$ followed by the orientation of $\gamma_i$ gives the orientation of $\bdry N=\bdry M= T(\tau)$.  In the future, we will use $\tau$ to denote $e(\tau)$ when the meaning is clear from the context.
The 1-complex in $T(\tau)$ constructed so far cuts $T(\tau)$ into 2-cells.  When we are working in $\reals^3\subset S^3$, given a projection of $\tau$ to the plane, there is a preferred $e(\tau)$ associated to the projection; each segment of $\tau$ is contained in the ``top half" of $T(\tau)$, as viewed in the projection, see Figure \ref{Cell}.   For $\tau\embed P$, where $P$ is any rational homology sphere, the choice of $\tau\subset T(\tau)$ is arbitrary subject to the condition that $\pi\circ e$ is the identity on $\tau$.   We will abuse notation and sometimes use $\gamma_i$ to denote an oriented closed curve. We suppose there are $k$ sectors and $k$ corresponding oriented fibers $\gamma_i$.

\begin{figure}[H]
\centering
\scalebox{0.7}{\includegraphics{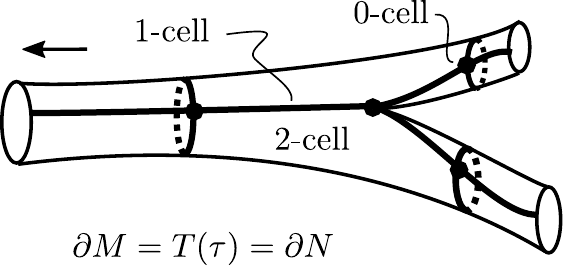}}
\caption{\footnotesize Cell structure for $\bdry M=T(\tau)$. }
\label{Cell}
\end{figure}


 \begin{lemma} \label{K}
 There is a map $K:\C(\tau)\times \reals^k\to H_1(\bdry N)$, $K(\bow,\bot)=[T_{\bow,\bot}]=[\tau(\bow)]+\sum_i t_i[\gamma_i]$ such that $\pi_*\circ K(\bow,\bot)=[\tau(\bow)]\in H_1(\tau,\reals)$.  $K$ has a linearity property $K(\lambda(\bow,\bot))=\lambda K(\bow,\bot)$, $\lambda\ge 0$, and $K$ has the additional property that $K(\bow,-\bot)=2[\tau(\bow)]-K(\bow,\bot)$.   The kernel of $\pi_*\circ K$ is $\{(0,\bot):\sum_i t_i[\gamma_i]=0\}$.   $K(\bow,\bot)\in H_1(\bdry N)$ is the homology class represented by $T_{\bow,\bot}(\tau)$.
 \end{lemma}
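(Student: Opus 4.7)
The plan is to verify each clause of the lemma in turn, separating the purely formal/algebraic content (well-definedness of $K$ and the four identities it satisfies) from the geometric content that $K(\bow,\bot)$ is the homology class of the link $T_{\bow,\bot}(\tau)$. Using the cell structure on $T(\tau)=\bdry N$ described just before the lemma, the formal sum $\sum_i w_i s_i$ is a real cellular $1$-chain supported on $e(\tau)\subset \bdry N$; I would check that it is a cycle by examining both types of $0$-cells. At a midpoint $0$-cell the two incident $1$-cells carry the same weight and consistent orientation, so their boundary contributions cancel; at a branch $0$-cell the boundary expression is precisely the switch equation, which vanishes by the hypothesis that $\bow$ is invariant. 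Since each meridional $1$-cell $\gamma_i$ is already a closed curve in $\bdry N$, it follows that $K(\bow,\bot)=[\tau(\bow)]+\sum_i t_i[\gamma_i]$ is a well-defined element of $H_1(\bdry N;\reals)$.

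Next I would dispatch the formal identities. Because $\pi\circ e=\id_\tau$, we have $\pi_*[\tau(\bow)]=[\tau(\bow)]\in H_1(\tau;\reals)$; because each $\gamma_i$ is a fiber of $\pi$ and is therefore collapsed to a point, $\pi_*[\gamma_i]=0$; combining these gives $\pi_*\circ K(\bow,\bot)=[\tau(\bow)]$. The positive homogeneity $K(\lambda(\bow,\bot))=\lambda K(\bow,\bot)$ with $\lambda\geq 0$ is immediate from the linearity of the defining formula in $\bow$ and in $\bot$, and the reflection identity $K(\bow,-\bot)=[\tau(\bow)]-\sum_i t_i[\gamma_i]=2[\tau(\bow)]-K(\bow,\bot)$ is a line of arithmetic. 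For the kernel assertion, if $(\bow,\bot)$ lies in the kernel then applying $\pi_*$ gives $[\tau(\bow)]=0$, which by the injectivity of the map $g$ from Proposition \ref{HomologySeifert} forces $\bow={\bf 0}$, after which the defining equation collapses to $\sum_i t_i[\gamma_i]=0$ in $H_1(\bdry N;\reals)$.

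The main obstacle, and the only genuinely geometric step, is the last assertion, that $K(\bow,\bot)$ equals the homology class in $H_1(\bdry N;\reals)$ represented by the lamination link $T_{\bow,\bot}(\tau)$. I would treat the untwisted case $\bot={\bf 0}$ first: the measured lamination $T_{\bow,{\bf 0}}(\tau)$ is carried by $T(\tau)$, and on each tube $S^1\times s_i$ the leaves form a product family of total transverse measure $w_i$; pushing each such leaf radially onto the central curve $e(s_i)$ exhibits a measured homology from the associated real $1$-cycle to $w_i\,[s_i]$, and summing over $i$ yields $[\tau(\bow)]$. For general $\bot$, the introduction of a twist with parameter $t_i$ on the $i$-th tube is, by the convention stated in the introduction, a real-valued Dehn twist along the cutting fiber in which a point on the positive side is moved by $t_i$ units of transverse measure relative to the negative side. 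Since the cutting fiber carries total transverse measure $w_i$, each leaf gains meridional winding $t_i/w_i$; integrating this against the transverse weight $w_i$ on the $i$-th tube contributes exactly $t_i[\gamma_i]$ to the homology class. Summing over all tubes gives $[\tau(\bow)]+\sum_i t_i[\gamma_i]=K(\bow,\bot)$.

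The subtle point, and where the most care is required, is that a real-valued twist cannot in general be represented as a composition of integer Dehn twists; the formula is rescued by working in real-coefficient homology, where the transverse measure legitimately scales the meridional contribution by $t_i/w_i$ and recombines with the total weight $w_i$ to produce the stated $t_i[\gamma_i]$.
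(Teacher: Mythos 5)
The paper offers no proof of this lemma beyond the remark that it ``is easily verified,'' so your write-up supplies detail the paper omits; what you do is correct and is surely what the author intends: well-definedness of $[\tau(\bow)]$ via the switch equations at branch $0$-cells (and trivially at midpoint $0$-cells), $\pi_*[\gamma_i]=0$ because each $\gamma_i$ is a fiber, the homogeneity and reflection identities by arithmetic, and the identification of $K(\bow,\bot)$ with $[T_{\bow,\bot}(\tau)]$ by pushing the measured leaves onto the core $e(\tau)$ and observing that a twist of $t_i$ units of transverse measure on the $i$-th tube contributes exactly $t_i[\gamma_i]$ after integrating against the measure. One small point of friction, which lies in the paper's statement rather than in your argument: as literally written, $\pi_*\circ K(\bow,\bot)=[\tau(\bow)]$ does not see $\bot$ at all, so the kernel of $\pi_*\circ K$ is all of $\{0\}\times\reals^k$; the set $\{(0,\bot):\sum_i t_i[\gamma_i]=0\}$ named in the lemma is the kernel of $K$ itself, and that is in fact what your argument computes (you start from $K(\bow,\bot)=0$ and \emph{then} apply $\pi_*$ to conclude $\bow=0$, after which $\sum_i t_i[\gamma_i]=0$ follows). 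You should make explicit which kernel you are computing, but the substance of your verification is sound and more complete than anything in the paper.
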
 
 
The lemma is easily verified.


\begin{proof}[Proof of Proposition \ref{HomologySeifert}]  We have $P$  a rational homology 3-sphere with $\tau\embed P$ an embedded oriented train track.  Let $\bow$ be an invariant weight vector for $\tau$ with positive entries.  As before, we let $M$ denote $P\setminus \intr(N)$, a manifold with boundary, we let $h:(M,\bdry M)\to (P,N)$ be the inclusion, and we use $j$ to denote the inclusion $h|_{\bdry M}=j:\bdry M\to N$. In this proof all homology groups are assumed to have real coefficients.
\hop

We wish to establish the following sequence of maps.

$$\C(\tau)\xrightarrow{g} H_1(\tau)\xleftarrow[\approx]{\pi_*} H_1(N)\xleftarrow[\approx]{\bdry}H_2(P,N)\xleftarrow[\approx]{h_*} H_2(M,\bdry M)\xrightarrow{\bdry}H_1(\bdry M)\xrightarrow{j_*}H_1(N)\xrightarrow [\approx]{\pi_*}  H_1(\tau) ,$$
and show there is a linear injection $g$ which converts an invariant weight vector on $\tau$ to an element of $H_1(\tau)$.  We also want to show that $\bdry h_*=j_*\bdry$.  (Homology with real coefficients.)

\hop

First we define $g:\C(\tau)\to H_1(\tau)$ and show it is injective.   We use the coarser cell decomposition of $\tau$, where the 1-cells are segments of $\tau$, together with one segment in each closed curve component of $\tau$.  We choose orientations for the 1-cells of $\tau$ consistent with the orientation of $\tau$.  Now using the chain complex for cellular homology, an invariant weight vector on $\tau$ immediately gives a cycle in the 1-chains $C_1(\tau)$.   Since there are no 2-cells, no cycle is a boundary, so the 1-cycles $Z_1(\tau)\approx H_1(\tau)$.  The map $g:\C(\tau)\to Z_1(\tau)$ is  defined by $g(\bow)=\sum w_is_i$, where $s_i$ is the $i$-th oriented 1-cell and $w_i$ is the weight on that cell.  (Each 1-cell $s_i$ corresponds to a unique sector, even if it lies in a closed curve sector.)  Cleary $g(\bow)$ is a cycle if $\bow$ is an invariant measure.
  If $g(\bow)=g(\bov)$, then $\sum w_is_i=\sum v_is_i$, so $w_i=v_i$ and $\bow=\bov$.  

Next we define the other maps in the sequence and prove that some of them are isomorphisms. The map $\pi_*:H_1(N)\to H_1(\tau)$ is an isomorphism because $N$ and $\tau$ are homotopy equivalent.  Consider the long exact sequence for relative homology for the pair $(P,N)$.  Since $P$ is a rational homology sphere, $H_2(P)=H_1(P)=0$ , hence $\bdry:H_2(P,N)\to H_1(N)$ is an isomorphism.  By excision, the map $h_*:H_2(M,\bdry M)\to H_2(P,N)$ is an isomorphism.  There is a boundary operator $\bdry:H_2(M,\bdry M)\to H_1(\bdry M)$ in the long exact sequence for the pair $(M,\bdry M)$.   The long exact sequence for $(M,\bdry M)$ is related to the long exact sequence for $(P,N)$ by the inclusion $h:(M,\bdry M)\to (P,N)$ whose restriction to $\bdry M$ is $j$.   Then by naturality, we have $\bdry h_*=j_*\bdry$, whence also $\pi_*\bdry h_*=\pi_*j_*\bdry$ in the sequence above.

Now we shall prove that given an invariant weight vector $\bow$ for $\tau$, there exist twist parameters $\bot$ such $T_{\bow,\bot}$ bounds a Seifert lamination.   Given $\bow$, starting at the left end of the sequence above we get an element in $\alpha\in H_2(M,\bdry M)$, namely $\alpha=h_*\inverse\bdry\inverse\pi_*\inverse g(\bow)$.  We represent $\alpha$ by a 2-dimensional lamination $B(\bov)$ properly embedded in $M$.   This can be done using simplicial homology by desingularizing a cycle representing $\alpha$, just as one represents a class in $H_2(M;\integers)$ by an embedded surface .   However, we want a better understanding of the Seifert lamination representative $B(\bov)$ of $\alpha$.  In particular, we want a better understanding of $\bdry B(\bov)$.   On the one hand, we chose $\alpha$ so that $\pi_*\bdry h_*(\alpha)= [\tau(\bow)]$.   On the other hand, naturality says  $\pi_*\bdry h_*(\alpha)= \pi_*j_*\bdry(\alpha)=[\tau(\bow)]\in H_1(\tau)$.     This means that if we represent $\bdry \alpha$ as a cycle in cellular homology, say $\bdry \alpha =[\sum u_is_i +\sum  t_i\gamma_i]$, then $\pi_*j_*\bdry\alpha=[\sum u_is_i]=[\tau(\bow)]\in H_1(\tau)$, which implies $u_i=w_i$.  The cycle $\sum w_is_i +\sum  t_i\gamma_i$ bounds a relative cycle.   We choose a triangulation of $M$ whose 1-skeleton contains the 1-skeleton of the cell decomposition of $\bdry M$.    Then the cycle  $\sum w_is_i +\sum  t_i\gamma_i$ can be viewed as a cycle in simplicial homology, so it bounds a relative simplicial cycle in $H_2(M,\bdry M)$.   We desingularize this relative cycle to get an oriented measured lamination $B(\bov)$.  By construction, $\bdry B(\bov)=T_{\bow,\bot}(\tau)$.  We have proved the existence of a twist parameter such that the lamination link $T_{\bow,\bot}(\tau)$ bounds a Seifert lamination.

Now we will show that if $S$ is an oriented surface with meridional boundary then $[S]=0$ in $H_2(M,\bdry M;\reals)$ and $[\bdry S]=0$ in $H_1(\bdry M;\reals)$.  We will use naturality again, $\bdry h_*=j_*\bdry$.   Since $[\bdry S]$ is trivial in $H_1(N)$, $j_*\bdry [S]=0$.   Therefore $\bdry h_*[S]=0$.  Because $P$ is a rational homology sphere, the boundary operator $\bdry:H_2(P,N)\to H_1(N)$ is an isomorphism, so $h_*[S]=0$.   Since $h_*$ is an isomorphism by excision, $[S]=0\in H_2(M,\bdry M)$.  Applying $\bdry:H_2(M,\bdry M)\to H_1(\bdry M)$, we see $[\bdry S]=0\in H_1(\bdry M)$.
\end{proof}

  
  \begin{proof}[Proof of Theorem \ref{AllSeifert} ]  One can see, using drawings, the effect of oriented combination of a weighted oriented surface with meridional boundary, $V_r(S)$, with a Seifert lamination $V_\bov(B)$.  Recall that to an an oriented surface $S$ with meridional boundary we can associate a vector $\bou$ which assigns to each segment of $s_i$ of $\tau$ an integer $u_i$ which is the number of meridional boundary components of $S$ which project to a point in $s_i$.   We count $u_i$ with signs according to the orientation of the component of $\bdry S$.   Looking at a positively (negatively) oriented boundary component of $S$, the effect of oriented combination of $\bdry V_r(S)$ with $\bdry V_\bov(B)$ near the boundary component of $S$ is a positive (negative) $r$- twist.   Adding twists associated to all boundary components, we see that the boundary of the combination $V_\boz(C)$ has boundary $T_{\bow,\bot+r\bou}(\tau)$, as required.
 
 Now we prove a converse.  Suppose $T_{\bow,\bot}(\tau)$ bounds a Seifert lamination $V_\bov(B)$ with boundary $T_{\bow,\bot}(\tau)$, and suppose $T_{\bow,\bos}(\tau)$ also bounds a Seifert lamination, say $V_\boxx(A)$.   We want to show that $\bos-\bot$ corresponds to an oriented combination of boundaries of oriented surfaces with meridional boundary.   As an element of $H_1(\bdry M)$, $ [T_{\bow,\bot}(\tau)]=K(\bow,\bot)$ while $[T_{\bow,\bos}(\tau)]=K(\bow,\bos)$, see Lemma \ref{K}.  The Seifert laminations $V_\bov(B)$ and $V_{\boxx}(A)$ represent elements of $H_2(M,\bdry M)$, and $\bdry\left\{[ V_\boxx(A)]-[V_{\bov}(B)]\right\}=[T_{\bow,\bos}(\tau)]- [T_{\bow,\bot}(\tau)]=K(\bow,\bos)-K(\bow,\bot)=K(0,\bos-\bot)$ by Lemma \ref{K}.   This class is represented by the cycle $\sum (s_i-t_i)\gamma_i$, and we know it is the boundary of a relative cycle representing an element of $H_2(M,\bdry M)$.  Just as in the proof of Proposition \ref{HomologySeifert}, we can realize this relative cycle as a lamination with meridional boundary, say $V_\boy(E)$, where $E$ is an oriented branched surface, each of whose boundary components is a meridian.    (Recall, we use cellular homology in $\bdry M$ with our carefully chosen cell decomposition, then use simplicial homology in $M$ with a suitable fine triangulation whose 1-skeleton contains the 1-skeleton of the cell decomposition of $\bdry M$, and finally we desingularize a relative simplicial cycle.) We can write $\boy\in \C(E)$ as a convex combination of rational invariant weight vectors $\boy_i$ for $E$, $\boy=\sum_i t_i\boy_i$.  Then we choose a positive integer $q$ so that $V_{q\boy_i}(E)$ is a surface $S_i$ for all $i$.  $V_\boy(E)$ is then obtained as an oriented combination of the weighted surfaces $V_{r_i}(S_i)$ for suitable real weights $r_i$.   Hence also the oriented combination of $V_{r_i}(S_i)$, $i=1,2,\ldots, \ell$ and $V_\bov(B)$ yields a lamination with the same boundary as $V_\boxx(A)$.
 \end{proof}

\bibliographystyle{amsplain}
\bibliography{ReferencesUO3}

\providecommand{\bysame}{\leavevmode\hbox to3em{\hrulefill}\thinspace}
\providecommand{\MR}{\relax\ifhmode\unskip\space\fi MR }
\providecommand{\MRhref}[2]{%
  \href{http://www.ams.org/mathscinet-getitem?mr=#1}{#2}
}
\providecommand{\href}[2]{#2}
\begin{thebibliography}{1}

\bibitem{UO:LamLinks}
U.~Oertel, \emph{Lamination links in 3-manifolds}, Preprint 2017,
  arXiv:1707.08094 [math.GT].

\bibitem{HS:SeifertSurface}
H.~Seifert, \emph{{\"{U}}ber das {G}eschlecht von {K}noten}, Math. Annalen
  \textbf{110} (1934), no.~1, 571--592.

\end{thebibliography}
\end{document}